\providecommand{\U}[1]{\protect \rule{.1in}{.1in}}
\theoremstyle{plain}
\newtheorem{lemma}{Lemma}
\newtheorem{theorem}{Theorem}
\numberwithin{equation}{section}
\begin{document}
\title[Tracially Nuclear C*-algebras]{A Characterization of Tracially Nuclear C*-algebras}
\author{Don Hadwin}
\address{Mathematics Department, University of New Hampshire}
\email{don@unh.edu}
\author{Weihua Li}
\address{Science and Mathematics Department, Columbia College Chicago}
\email{wli@colum.edu}
\author{Wenjing Liu}
\address{Mathematics Department, University of New Hampshire}
\email{wenjingtwins87@gmail.com}
\author{Junhao Shen}
\address{Mathematics Department, University of New Hampshire}
\email{Junhao.Shen@unh.edu}
\thanks{Supported by a Collaboration Grant from the Simons Foundation}
\thanks{Supported by a Faculty Development Grant from Columbia College Chicago }
\thanks{Supported by a Dissertation Year Fellowship from the University of New Hampshire}
\subjclass[2000]{Primary 46L05, 46L10 }
\keywords{tracially nuclear, $\mathcal{M}$-rank}
\dedicatory{Dedicated to the memory Uffe Haagerup.}
\begin{abstract}
We give two characterizations of tracially nuclear C*-algebras. The first is
that the finite summand of the second dual is hyperfinite. The second is in
terms of a variant of the weak* uniqueness property. The necessary condition
holds for all tracially nuclear C*-algebras. When the algebra is separable, we
prove the sufficiency.

\end{abstract}
\maketitle

\section{Introduction}

In \cite{HL} a unital C*-algebra $\mathcal{A}$ was defined to be
\emph{tracially nuclear} if, for every tracial state $\tau$ on $\mathcal{A}$,
if $\pi_{\tau}$ is the GNS representation for $\tau$, then $\pi_{\tau}\left(
\mathcal{A}\right)  ^{\prime \prime}$ is hyperfinite. Tracially nuclear
algebras also played a role in the theory of tracially stable C*-algebras
\cite{HS}. In this paper we give two new characterizations of tracially
nuclear C*-algebras, the first (Theorem \ref{2dual}) in terms of the second
dual of the algebra, and the second (Theorem \ref{half2}) in terms of weak*
approximate equivalence of representations into finite von Neumann algebras.
In one direction, we show (Theorem \ref{half1}) that if $\mathcal{A}$ is any
tracially nuclear C*-algebra, and $\mathcal{M}$ is any finite von Neumann
algebra, then the rank condition in \cite{DH} on two representations $\pi
,\rho:\mathcal{A}\rightarrow \mathcal{M}$ implies a strong version of weak*
approximate equivalence of $\pi$ and $\rho$. When $\mathcal{A}$ is separable
we prove the converse (Theorem \ref{half2}). Thus the second characterization
is an analogue of the characterization if nuclearity given in \cite{PWN}.

When $\mathcal{A}$ is separable, we only need to check $\pi_{\tau}\left(
\mathcal{A}\right)  ^{\prime \prime}$ is hyperfinite when $\tau$ is an
\emph{infinite-dimensional} \emph{factor state}, i.e., $\pi_{\tau}\left(
\mathcal{A}\right)  ^{\prime \prime}$ is a $II_{1}$ factor von Neumann algebra.

\begin{lemma}
\label{factor}Suppose $\mathcal{A}$ is a separable unital C*-algebra. Then
$\mathcal{A}$ is tracially nuclear if and only if, for every
infinite-dimensional factor tracial state $\tau$ on $\mathcal{A}$, $\pi_{\tau
}\left(  \mathcal{A}\right)  ^{\prime \prime}$ is hyperfinite.
\end{lemma}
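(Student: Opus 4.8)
The plan is as follows. One implication is immediate: if $\mathcal{A}$ is tracially nuclear then by definition $\pi_\tau(\mathcal{A})''$ is hyperfinite for \emph{every} tracial state, in particular for every infinite-dimensional factor tracial state. So the content is the converse, and the whole point is to reduce the hyperfiniteness of $\pi_\tau(\mathcal{A})''$ for an arbitrary tracial state $\tau$ to the factor case, where the hypothesis applies. The natural tool is the central (direct integral) decomposition of the finite von Neumann algebra $M := \pi_\tau(\mathcal{A})''$ over its center, which is exactly the ergodic decomposition of the trace $\tau$ into factor traces.

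Concretely, I would first fix an arbitrary tracial state $\tau$ and let $H_\tau$ be its GNS Hilbert space, which is separable because $\mathcal{A}$ is separable. The center $Z(M)$ is then an abelian von Neumann algebra on a separable Hilbert space, hence $Z(M) \cong L^\infty(X,\mu)$ for a standard probability space $(X,\mu)$. Disintegrating over $Z(M)$ gives a direct integral $H_\tau = \int_X^{\oplus} H_x\, d\mu(x)$, $M = \int_X^{\oplus} M_x\, d\mu(x)$, and a decomposition $\pi_\tau = \int_X^{\oplus} \pi_x\, d\mu(x)$ with $\pi_x(\mathcal{A})'' = M_x$ a factor for $\mu$-almost every $x$. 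Writing the GNS cyclic trace vector as $\xi_\tau = \int^{\oplus} \xi_x\, d\mu(x)$, for almost every $x$ the vector $\xi_x$ is cyclic and tracial for $M_x$, so that $\tau_x(a) := \langle \pi_x(a)\xi_x, \xi_x\rangle$ is a factor tracial state on $\mathcal{A}$ whose GNS representation is unitarily equivalent to $\pi_x$. Thus $\{M_x\}$ is, up to isomorphism, a measurable field of the von Neumann algebras $\pi_{\tau_x}(\mathcal{A})''$ attached to factor tracial states $\tau_x$.

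Next I would run the finite-factor dichotomy fiberwise. Each $M_x$ carries a trace and is a factor, hence is either a matrix algebra $M_n(\mathbb{C})$ (a type $I_n$ factor) or an infinite-dimensional $II_1$ factor. In the first case $M_x$ is finite-dimensional and so trivially hyperfinite; in the second case $\tau_x$ is an infinite-dimensional factor tracial state, so the hypothesis applies and $M_x$ is hyperfinite. Either way $M_x$ is hyperfinite for $\mu$-almost every $x$.

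Finally I would reassemble. The key input is that hyperfiniteness is stable under direct integrals: by Connes' theorem hyperfinite coincides with injective for von Neumann algebras with separable predual, and injectivity is preserved under direct integrals, in the sense that if almost every fiber of a measurable field is injective then so is the direct integral (one patches the fiberwise norm-one projections $B(H_x)\to M_x$ into a global conditional expectation $B(H_\tau)\to M$). Since almost every $M_x$ is injective, $M = \int^{\oplus} M_x\, d\mu$ is injective, hence hyperfinite; as $\tau$ was arbitrary, $\mathcal{A}$ is tracially nuclear. I expect the main obstacle to be the bookkeeping of the disintegration — verifying that the fibers really are GNS algebras of factor traces so that the hypothesis applies fiberwise, and that measurability survives when transporting injectivity through the decomposition — while the underlying analytic facts (central decomposition, Connes' theorem, stability of injectivity under direct integrals) are standard and can be cited.
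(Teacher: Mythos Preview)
Your proposal is correct and follows essentially the same route as the paper: central decomposition of $\pi_\tau(\mathcal{A})''$ into factors, identification of the fibers as GNS algebras of factor traces, the finite/$II_1$ dichotomy, and the fact that hyperfiniteness passes to and from direct integrals. The paper's proof is terser and simply asserts that $N$ is hyperfinite if and only if almost every $N_\omega$ is, whereas you justify this step via Connes' injectivity characterization; otherwise the arguments are the same.
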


\begin{proof}
We let $N=\pi_{\tau}\left(  \mathcal{A}\right)  ^{\prime \prime}$. Since
$\mathcal{A}$ is separable, $N$ acts on a separable Hilbert space. Using the
central decomposition we can write $N=\int_{\Omega}^{\oplus}N_{\omega}%
d\mu \left(  \omega \right)  $ where each $N_{\omega}$ is a factor von Neumann
algebra, and we can write $\pi_{\tau}=\int_{\Omega}^{\oplus}\pi_{\omega}%
d\mu \left(  \omega \right)  $ and $\tau=\int_{\Omega}^{\oplus}\tau_{\omega}%
d\mu \left(  \omega \right)  $ with each $\tau_{\omega}$ a factor state, each
$\pi_{\omega}=\pi_{\tau_{\omega}},$ and each $\pi_{\tau_{\omega}}\left(
\mathcal{A}\right)  ^{\prime \prime}=N_{\omega}$. Since $N$ is hyperfinite if
and only if almost every $N_{\omega}$ is hyperfinite, and since every
finite-dimensional factor is hyperfinite, the lemma is proved.
\end{proof}

\bigskip

\section{The second dual $\mathcal{A}^{\# \#}$}

If $\mathcal{R}\subset B\left(  H\right)  $ is a finite von Neumann algebra,
then we can write $H=\sum_{\gamma \in \Gamma}^{\oplus}H_{\gamma}$ and
$\mathcal{R}=\sum_{\gamma \in \Gamma}^{\oplus}\mathcal{R}_{\gamma}$, where each
$\mathcal{R}_{\gamma}\subset B\left(  H_{\gamma}\right)  $ has a faithful
normal tracial state $\tau_{\gamma}$. We can extend each $\tau_{\gamma}$ to a
tracial state on $\mathcal{R}$ by saying if $T=\sum_{\lambda \in \Gamma}%
^{\oplus}T_{\lambda},$ then $\tau_{\gamma}\left(  T\right)  =\tau_{\gamma
}\left(  T_{\gamma}\right)  $. Each $\tau_{\gamma}$ gives a seminorm
$\left \Vert T\right \Vert _{2,\gamma}=\tau_{\gamma}\left(  T^{\ast}T\right)
^{1/2}$. It is a simple fact that on bounded subsets of $\mathcal{R}$, the
strong (SOT) and $\ast$-strong ($\ast$-SOT) operator topologies coincide and
are generated by the family $\left \{  \left \Vert \cdot \right \Vert _{2,\gamma
}:\gamma \in \Gamma \right \}  $. Thus a bounded net $\left \{  T_{n}\right \}  $ in
$\mathcal{R}$ converges in SOT or $\ast$-SOT to $T\in \mathcal{R}$ if and only
if, for every $\gamma \in \Gamma$,%
\[
\left \Vert T_{n}-T\right \Vert _{2,\gamma}\rightarrow0.
\]
Also every von Neumann algebra $\mathcal{R}$ can uniquely be decomposed into a
direct sum $\mathcal{R}=\mathcal{R}_{f}\oplus \mathcal{R}_{i}$, where
$\mathcal{R}_{f}$ is a finite von Neumann algebra and $\mathcal{R}_{i}$ has no
finite direct summands. Equivalently, $\mathcal{R}_{i}$ has no normal tracial
states. Relative to this decomposition, we write $Q_{f,\mathcal{R}}=1\oplus0$.

If $\mathcal{A}$ is a unital C*-algebra, then $\mathcal{A}^{\# \#}$ is a von
Neumann algebra, and, using the universal representation, we can assume
$\mathcal{A}\subset \mathcal{A}^{\# \#}\subset B\left(  \mathcal{H}\right)  $
where the weak* topology on $\mathcal{A}^{\# \#}$ coincides with the weak
operator topology, so that $\mathcal{A}^{\prime \prime}=\mathcal{A}^{\# \#}$.
Moreover, for every von Neumann algebra $\mathcal{R}$ and every unital $\ast
$-homomorphism $\pi:\mathcal{A}\rightarrow \mathcal{R}$, there is a weak*-weak*
continuous unital $\ast$-homomorphism $\hat{\pi}:\mathcal{A}^{\#
\#}\rightarrow \mathcal{R}$ such that $\hat{\pi}|_{\mathcal{A}}=\pi$. Moreover,
ker$\hat{\pi}$ being a weak* closed two-sided ideal in $\mathcal{A}^{\# \#}$
has the form%
\[
\ker \hat{\pi}=\left(  1-P_{\pi}\right)  \mathcal{A}^{\# \#},\text{ with
}P_{\pi}=P_{\pi}^{2}=P_{\pi}^{\ast}\in \mathcal{Z}\left(  \mathcal{A}^{\#
\#}\right)  ,
\]
where $\mathcal{Z}\left(  \mathcal{M}\right)  $ denotes the center of a von
Neumann algebra $\mathcal{M}$. Thus%
\[
\mathcal{A}^{\# \#}=P_{\pi}\mathcal{A}^{\# \#}\oplus \ker \hat{\pi}\text{ .}%
\]
The following theorem contains our first characterization of tracially nuclear C*-algebras.

\begin{theorem}
\label{2dual}If $\mathcal{A}$ is a unital C*-algebra, then

\begin{enumerate}
\item For every unital $\ast$-homomorphism $\pi:\mathcal{A}\rightarrow
\mathcal{M}$ with $\mathcal{M}$ a finite von Neumann algebra, $P_{\pi}\leq
Q_{f,\mathcal{A}^{\# \#}}$.

\item $\mathcal{A}$ is tracially nuclear if and only if $\  \left(
\mathcal{A}^{\# \#}\right)  _{f}$ is a hyperfinite von Neumann algebra.
\end{enumerate}
\end{theorem}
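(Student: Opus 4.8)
The plan is to derive both parts from the universal property of the normal extension $\hat{\pi}:\mathcal{A}^{\#\#}\to\mathcal{R}$ recorded above, together with the standard permanence properties of finiteness and hyperfiniteness. For part (1), I would begin with the extension $\hat{\pi}:\mathcal{A}^{\#\#}\to\mathcal{M}$ of $\pi$. Since $\hat{\pi}$ is weak*-weak* continuous (normal), its range $\hat{\pi}(\mathcal{A}^{\#\#})$ is a von Neumann subalgebra of the finite algebra $\mathcal{M}$, and restricting a faithful family of normal tracial states of $\mathcal{M}$ to this subalgebra shows it is itself finite. Because $\ker\hat{\pi}=(1-P_{\pi})\mathcal{A}^{\#\#}$, the map $\hat{\pi}$ restricts to a normal $*$-isomorphism of $P_{\pi}\mathcal{A}^{\#\#}$ onto $\hat{\pi}(\mathcal{A}^{\#\#})$, so $P_{\pi}\mathcal{A}^{\#\#}$ is finite. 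Finally, $P_{\pi}$ is central and the summand $(1-Q_{f,\mathcal{A}^{\#\#}})\mathcal{A}^{\#\#}$ has no nonzero finite direct summand; thus $P_{\pi}(1-Q_{f,\mathcal{A}^{\#\#}})\mathcal{A}^{\#\#}$, being at once a direct summand of the finite algebra $P_{\pi}\mathcal{A}^{\#\#}$ and of $(1-Q_{f,\mathcal{A}^{\#\#}})\mathcal{A}^{\#\#}$, must vanish, i.e.\ $P_{\pi}\le Q_{f,\mathcal{A}^{\#\#}}$.

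For the easy direction of part (2), suppose $M:=(\mathcal{A}^{\#\#})_{f}$ is hyperfinite and let $\tau$ be a tracial state on $\mathcal{A}$. Applying the setup to $\pi_{\tau}:\mathcal{A}\to\pi_{\tau}(\mathcal{A})''$ identifies $\pi_{\tau}(\mathcal{A})''$ with $P_{\pi_{\tau}}\mathcal{A}^{\#\#}$, and part (1) gives $P_{\pi_{\tau}}\le Q_{f,\mathcal{A}^{\#\#}}$, so $\pi_{\tau}(\mathcal{A})''\cong P_{\pi_{\tau}}M$ is a direct summand of $M$ and therefore hyperfinite. Hence $\mathcal{A}$ is tracially nuclear.

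For the converse, assume $\mathcal{A}$ is tracially nuclear. Using the decomposition of finite von Neumann algebras recalled at the start of this section, write $M=(\mathcal{A}^{\#\#})_{f}=\sum^{\oplus}_{\gamma\in\Gamma}M_{\gamma}$, where each $M_{\gamma}=z_{\gamma}M$ (with $z_{\gamma}\in\mathcal{Z}(\mathcal{A}^{\#\#})$ the central projection onto $M_{\gamma}$) carries a faithful normal tracial state $\tau_{\gamma}$. The key step is to realize each $M_{\gamma}$ as a GNS algebra of $\mathcal{A}$: composing $a\mapsto z_{\gamma}a$ with the faithful normal GNS representation of $(M_{\gamma},\tau_{\gamma})$ yields a cyclic representation of $\mathcal{A}$ whose vector state is the tracial state $\sigma_{\gamma}$ on $\mathcal{A}$ given by $\sigma_{\gamma}(a)=\tau_{\gamma}(z_{\gamma}a)$. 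Since $z_{\gamma}\mathcal{A}$ is weak*-dense in $M_{\gamma}$, this cyclic representation is unitarily equivalent to $\pi_{\sigma_{\gamma}}$, so $M_{\gamma}\cong\pi_{\sigma_{\gamma}}(\mathcal{A})''$. Tracial nuclearity makes each $\pi_{\sigma_{\gamma}}(\mathcal{A})''$, and hence each $M_{\gamma}$, hyperfinite; as hyperfiniteness is preserved under arbitrary direct sums, $M=\sum^{\oplus}_{\gamma}M_{\gamma}$ is hyperfinite.

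I expect the converse to be the crux, and within it the identification of each direct summand $M_{\gamma}$ of $(\mathcal{A}^{\#\#})_{f}$ with an honest GNS algebra $\pi_{\sigma_{\gamma}}(\mathcal{A})''$ coming from a tracial state of $\mathcal{A}$; this is precisely what allows tracial nuclearity to be applied summand by summand. The remaining ingredients — finiteness passing to von Neumann subalgebras, and hyperfiniteness passing to direct summands and to arbitrary direct sums — are standard and should present no serious difficulty.
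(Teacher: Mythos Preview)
Your proposal is correct and follows essentially the same approach as the paper's proof. The only cosmetic difference is in part (1): the paper produces a normal tracial state on $(\mathcal{A}^{\#\#})_{i}$ directly by composing $\hat{\pi}$ with a normal tracial state of $\mathcal{M}$, whereas you phrase the same obstruction structurally (the summand $P_{\pi}(1-Q_{f,\mathcal{A}^{\#\#}})\mathcal{A}^{\#\#}$ is simultaneously finite and a summand of an algebra with no finite summand); for part (2) both you and the paper decompose $(\mathcal{A}^{\#\#})_{f}$ into summands with faithful normal tracial states, identify each summand with a GNS algebra $\pi_{\sigma}(\mathcal{A})''$ for a tracial state $\sigma$ on $\mathcal{A}$, and then invoke tracial nuclearity summand by summand together with closure of hyperfiniteness under direct sums, while the reverse implication uses part (1) in the same way.
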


\begin{proof}
$\left(  1\right)  $. Assume, via contradiction, $\hat{\pi}\left(
1-Q_{f,\mathcal{A}^{\# \#}}\right)  \neq0$. Since $\mathcal{M}$ is finite,
there is a normal tracial state $\tau$ on $\mathcal{M}$ such that
\[
s=\tau \left(  \hat{\pi}\left(  1-Q_{f,\mathcal{A}^{\# \#}}\right)  \right)
\neq0.
\]
Hence the map $\gamma:\left(  \mathcal{A}^{\# \#}\right)  _{i}\rightarrow
\mathbb{C}$ defined by%
\[
\gamma \left(  T\right)  =\frac{1}{s}\hat{\pi}\left(  0\oplus T\right)
\]
is a faithful normal tracial state on $\left(  \mathcal{A}^{\# \#}\right)
_{i}$, which is a contradiction. Thus
\[
\hat{\pi}\left(  1-Q_{f,\mathcal{A}^{\# \#}}\right)  =0,
\]
which means that $P_{\pi}\leq Q_{f,\mathcal{A}^{\# \#}}$.

$\left(  2\right)  $. Suppose $\mathcal{A}$ is tracially nuclear.
$(\mathcal{A}^{\# \#})_{f}=\Sigma_{\lambda \in \Lambda}^{\oplus}({\mathcal{R}%
}_{\lambda},\tau_{\lambda})$, where $\tau_{\lambda}$ is a faithful normal
tracial state on ${\mathcal{R}}_{\lambda}.$ Then $\mathcal{A}^{\#
\#}=(\mathcal{A}^{\# \#})_{f}\oplus \left(  \mathcal{A}^{\# \#}\right)
_{i}=\Sigma_{\lambda \in \Lambda}^{\oplus}{\mathcal{R}}_{\lambda}\oplus \left(
\mathcal{A}^{\# \#}\right)  _{i}$relative to $\mathcal{H}=\sum_{\lambda
\in \Lambda}^{\oplus}\mathcal{H}_{\lambda}\oplus \mathcal{H}_{i}$. Viewing
$\mathcal{A}\subset \mathcal{A}^{\# \#}$, we let $\pi_{\lambda}:\mathcal{A}%
\rightarrow \mathcal{R}_{\lambda}$ be defined by $\pi_{\lambda}\left(
A\right)  =A|_{\mathcal{H}_{\lambda}}$. Then $\psi_{\lambda}=\tau_{\lambda
}\circ \pi_{\lambda}$ is a tracial state on $\mathcal{A}$ and, since
$\mathcal{A}$ is weak*-dense in $\mathcal{A}^{\# \#}$, $\pi_{\psi_{\lambda}%
}\left(  \mathcal{A}\right)  ^{-\text{\textrm{weak*}}}=\mathcal{R}_{\lambda}$.
Since $\mathcal{A}$ is tracially nuclear, $\mathcal{R}_{\lambda}$ must be
hyperfinite. Hence, $({\mathcal{A}}^{\sharp \sharp})_{f}=\Sigma_{\lambda
\in \Lambda}^{\oplus}{\mathcal{R}}_{\lambda}$ is hyperfinite.

Conversely, suppose $\left(  \mathcal{A}^{\# \#}\right)  _{f}$ is hyperfinite,
and suppose $\tau$ is a tracial state on $\mathcal{A}$. Since $\pi_{\tau
}\left(  \mathcal{A}\right)  ^{\prime \prime}$ has a faithful normal tracial
state, it must be finite. Thus $P_{\pi_{\tau}}\leq$ $Q_{f,\mathcal{A}^{\# \#}%
}$. This means that $P_{\pi_{\tau}}\mathcal{A}^{\# \#}$ is a direct summand of
$\left(  \mathcal{A}^{\# \#}\right)  _{f}$, and is therefore hyperfinite. But
this summand is isomorphic to $\pi_{\tau}\left(  \mathcal{A}\right)
^{\prime \prime}.$ Thus $\mathcal{A}$ is tracially nuclear.
\end{proof}

\section{Weak* approximate equivalence in finite von Neumann algebras}

Suppose $\mathcal{A}$ is a unital C*-algebra, $\mathcal{R}$ is a von Neumann
algebra and $\pi,\rho:\mathcal{A}\rightarrow \mathcal{R}$ are unital $\ast
$-homomorphisms. Following \cite{PWN}, $\pi$ and $\rho$ are \emph{weak*
approximately equivalent} if there are nets $\left \{  U_{\lambda}\right \}  $
and $\left \{  V_{\lambda}\right \}  $ of unitary operators in $\mathcal{R}$
such that, for every $A\in \mathcal{A}$,
\[
U_{\lambda}^{\ast}\pi \left(  A\right)  U_{\lambda}\overset
{\text{\textrm{weak*}}}{\rightarrow}\rho \left(  A\right)  \text{ and
}V_{\lambda}^{\ast}\rho \left(  A\right)  V_{\lambda}\overset
{\text{\textrm{weak*}}}{\rightarrow}\pi \left(  A\right)  \text{.}%
\]
It was observed in \cite{PWN} that it follows that the convergence above
actually occurs in the $\ast$-strong operator topology ($\ast$-SOT).

Suppose $\mathcal{M}$ is a von Neumann algebra and $T\in \mathcal{M}$.
Following \cite{DH}, $\mathcal{M}$-rank$\left(  T\right)  $ is defined to be
the Murray von Neumann equivalence class in $\mathcal{M}$ of the projection
onto the closure of the range of $T$. In \cite{PWN} it was shown that if
$\mathcal{A}$ is a separable nuclear C*-algebra and $\mathcal{M}$ is a von
Neumann algebra acting on a separable Hilbert space, then two unital $\ast
$-homomorphisms $\pi,\rho:\mathcal{A}\rightarrow \mathcal{M}$ are weak*
approximately equivalent if and only if, $\left(  \mathcal{M}\text{-rank}%
\right)  \circ \pi=\left(  \mathcal{M}\text{-rank}\right)  \circ \rho$. They
also proved that this property for $\mathcal{A}$ is equivalent to nuclearity.

The following result is from \cite{DW}.For completeness we include a short proof.

\begin{lemma}
\label{H-D} \cite{DW}Suppose $a=a^{\ast}$ in $\mathcal{B}(H)$, $0\leq a\leq1$
and ${\mathcal{C}}_{0}^{\ast}(a)$ is the norm-closure of $\{p(a),p\in
\mathbb{C}\left[  z\right]  ,$ $p(0)=0\}$. Suppose $\mathcal{M}$ is a finite
von Neumann algebra with a center-valued trace $\Phi:{\mathcal{M}}%
\rightarrow{\mathcal{Z}(M)}$, and $\pi,\rho:{\mathcal{C}}_{0}^{\ast
}(a)\rightarrow{\mathcal{M}}$ are *-homomorphisms. Then the following are equivalent:

(1). $\forall x\in{\mathcal{C}}_{0}^{\ast}(a)$, ${\mathcal{M}}$-$rank\  \pi
(x)={\mathcal{M}}$-$rank\  \rho(x),$

(2). $\Phi \circ \pi=\Phi \circ \rho$.
\end{lemma}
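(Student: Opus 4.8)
The plan is to encode all the data of $\pi$ and $\rho$ in two positive contractions and reduce both conditions to a single statement about their scalar spectral distributions. Set $b=\pi(a)$ and $c=\rho(a)$, positive contractions in $\mathcal{M}$. Since $\pi,\rho$ are $\ast$-homomorphisms, every element of $\mathcal{C}_0^\ast(a)$ has the form $f(a)$ for a continuous $f$ on $\sigma(a)$ with $f(0)=0$, and naturality of the (non-unital) continuous functional calculus gives $\pi(f(a))=f(b)$ and $\rho(f(a))=f(c)$. I will work with continuous $f$ on $[0,1]$ vanishing at $0$, which is harmless since $\sigma(b),\sigma(c)\subseteq[0,1]$.

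First I would rewrite condition (1). For $x=f(a)$ one has $f(b)^\ast f(b)=|f|^2(b)$, so the projection onto $\overline{\operatorname{ran} f(b)}$ equals $|f|(b)$'s range projection, namely the spectral projection $\chi_{\{f\neq 0\}}(b)$ (and likewise for $c$); this also covers complex-valued $f$. In a finite von Neumann algebra two projections are Murray--von Neumann equivalent exactly when they have the same center-valued trace, so $\mathcal{M}\text{-rank}\,\pi(x)=\mathcal{M}\text{-rank}\,\rho(x)$ is equivalent to $\Phi(\chi_{\{f\neq 0\}}(b))=\Phi(\chi_{\{f\neq 0\}}(c))$. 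As $f$ ranges over continuous functions vanishing at $0$, the sets $\{f\neq 0\}$ range over all relatively open subsets of $[0,1]$ avoiding $0$, so (1) becomes: $\Phi(\chi_U(b))=\Phi(\chi_U(c))$ for every open $U\subseteq[0,1]\setminus\{0\}$. Condition (2) is, verbatim, $\Phi(f(b))=\Phi(f(c))$ for all continuous $f$ with $f(0)=0$.

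Next I would pass to scalars. Since $\Phi$ is normal and normal states separate the points of the abelian von Neumann algebra $\mathcal{Z}(\mathcal{M})$, for each normal state $\omega$ on $\mathcal{Z}(\mathcal{M})$ the assignment $S\mapsto \omega(\Phi(\chi_S(b)))$ is a finite positive Borel measure $\nu^\omega_b$ on $[0,1]$ with $\omega(\Phi(g(b)))=\int g\,d\nu^\omega_b$ for bounded Borel $g$; define $\nu^\omega_c$ likewise. Equality of elements of $\mathcal{Z}(\mathcal{M})$ is the same as equality after applying every such $\omega$, so (1) reads $\nu^\omega_b(U)=\nu^\omega_c(U)$ for all open $U\not\ni 0$ and all $\omega$, while (2) reads $\int f\,d\nu^\omega_b=\int f\,d\nu^\omega_c$ for all continuous $f$ vanishing at $0$ and all $\omega$. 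For fixed $\omega$ both are equivalent to the equality of $\nu^\omega_b$ and $\nu^\omega_c$ on the Borel subsets of $(0,1]=[0,1]\setminus\{0\}$, and neither sees the mass at $\{0\}$: the relatively open sets form a $\pi$-system generating the Borel $\sigma$-algebra and containing the whole space $(0,1]$ (which is open in $[0,1]$), so agreement on them forces agreement on all Borel sets by uniqueness of finite measures; conversely, continuous functions vanishing at $0$ determine a finite Borel measure on the locally compact space $(0,1]$ via the Riesz representation theorem. (For $(2)\Rightarrow(1)$ one may instead approximate $\chi_U$ from below by continuous $f_n\uparrow\chi_U$ with $f_n(0)=0$ and use normality of $\Phi$ directly.) This gives (1) $\iff$ (2).

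I expect the main obstacle to be the bookkeeping of the first two steps rather than any deep estimate: correctly identifying the range projection of $f(b)$ with the spectral projection $\chi_{\{f\neq 0\}}(b)$, invoking the classification of projections up to Murray--von Neumann equivalence by the center-valued trace, and justifying the reduction to genuine scalar measures through normal states on the center. Once both conditions have been funnelled into the equality of $\nu^\omega_b$ and $\nu^\omega_c$ on $(0,1]$, the remaining equivalence is standard measure theory.
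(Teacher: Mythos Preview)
Your argument is correct, and it reaches the same conclusion as the paper's proof by a genuinely different route. The paper proceeds via the weak*-continuous extensions $\hat{\pi},\hat{\rho}:\mathcal{C}_0^\ast(a)^{\#\#}\to\mathcal{M}$: for $(1)\Rightarrow(2)$ it shows that $\Phi(\hat{\pi}(\chi_{(\alpha,1]}(x)))=\Phi(\hat{\rho}(\chi_{(\alpha,1]}(x)))$ for each positive contraction $x$ and each $\alpha\in(0,1)$, and then recovers $\Phi(\pi(x))=\Phi(\rho(x))$ by an explicit Riemann-sum approximation $x\approx\sum_k\frac{k}{n}\chi_{(k/n,(k+1)/n]}(x)$; for $(2)\Rightarrow(1)$ it simply applies $\Phi\circ\hat{\pi}=\Phi\circ\hat{\rho}$ to the range projection $\chi_{(0,\infty)}(|x|)$. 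You instead stay inside $\mathcal{M}$, encode everything in the single pair $b=\pi(a),\,c=\rho(a)$, and push both conditions down to scalar spectral measures $\nu_b^\omega,\nu_c^\omega$ on $(0,1]$ by composing with normal states on the center, finishing with the $\pi$-system/Riesz uniqueness of finite Borel measures. The paper's argument is more hands-on and avoids any appeal to measure-theoretic uniqueness theorems, while your reduction to scalar measures is cleaner conceptually and makes transparent that both conditions amount to ``$b$ and $c$ have the same $\Phi$-valued spectral distribution off $0$''; either approach works without difficulty here because the domain algebra is commutative and singly generated.
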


\begin{proof}
$\left(  1\right)  \Rightarrow \left(  2\right)  $. We can extend $\pi$ and
$\rho$ to weak*-weak* continuous *-homomorphisms $\hat{\pi},\hat{\rho
}:{\mathcal{C}}_{0}^{\ast}(a)^{\# \#}\rightarrow \mathcal{M}$. Suppose
$x\in{\mathcal{C}}_{0}^{\ast}(a)$ and $0\leq x\leq1$. Suppose $0<\alpha<1$ and
define $f_{\alpha}:\left[  0,1\right]  \rightarrow \left[  0,1\right]  $ by
\[
f\left(  t\right)  =dist\left(  t,\left[  0,\alpha \right]  \right)  .
\]
Since $f\left(  0\right)  =0,$ we see that $f\left(  x\right)  \in \mathcal{A}%
$, and $\chi_{(\alpha,1]}\left(  x\right)  =$ weak*-$\lim_{n\rightarrow \infty
}f\left(  x\right)  ^{1/n}\in \mathcal{A}^{\# \#}$, so
\[
\hat{\pi}\left(  \chi_{(\alpha,1]}\left(  x\right)  \right)  \text{, and }%
\hat{\rho}\left(  \chi_{(\alpha,1]}\left(  x\right)  \right)  \text{ }%
\]
are the range projections for $\pi \left(  f\left(  x\right)  \right)  $ and
$\rho \left(  f\left(  x\right)  \right)  $, respectively. Since
\[
{\mathcal{M}}-rank\  \pi(f\left(  x\right)  )={\mathcal{M}}-rank\  \rho(f\left(
x\right)  ),
\]
we see that $\hat{\rho}\left(  \chi_{(\alpha,1]}\left(  x\right)  \right)  $
and $\hat{\pi}\left(  \chi_{(\alpha,1]}\left(  x\right)  \right)  $ are Murray
von Neumann equivalent.Hence
\[
\Phi \left(  \hat{\pi}\left(  \chi_{(\alpha,1]}\left(  x\right)  \right)
\right)  =\Phi \left(  \hat{\rho}\left(  \chi_{(\alpha,1]}\left(  x\right)
\right)  \right)  .
\]
Thus, Suppose $0<\alpha<\beta<1$. Since $\chi_{(\alpha,\beta]}=\chi
_{(\alpha,1]}-\chi_{(\beta,1]},$ we see that%
\[
\Phi \left(  \hat{\pi}\left(  \chi_{(\alpha,\beta]}\left(  x\right)  \right)
\right)  =\Phi \left(  \hat{\rho}\left(  \chi_{(\alpha,\beta]}\left(  x\right)
\right)  \right)  .
\]
Thus, for all $n\in \mathbb{N}$,%
\[
\Phi \left(  \hat{\pi}\left(  \sum_{k-1}^{n-1}\frac{k}{n}\chi_{(\frac{k}%
{n},\frac{k+1}{n}]}\left(  x\right)  \right)  \right)  =\Phi \left(  \hat{\rho
}\left(  \sum_{k-1}^{n-1}\frac{k}{n}\chi_{(\frac{k}{n},\frac{k+1}{n}]}\left(
x\right)  \right)  \right)  .
\]
Since, for every $n\in \mathbb{N}$,
\[
\left \Vert x-\sum_{k-1}^{n-1}\frac{k}{n}\chi_{(\frac{k}{n},\frac{k+1}{n}%
]}\left(  x\right)  \right \Vert \leq1/n\text{,}%
\]
it follows that
\[
\Phi \left(  \pi \left(  x\right)  \right)  =\Phi \left(  \hat{\pi}\left(
x\right)  \right)  =\Phi \left(  \hat{\rho}\left(  x\right)  \right)
=\Phi \left(  \rho \left(  x\right)  \right)  .
\]
Since $\mathcal{A}$ is the linear span of its positive contractions,
$\Phi \circ \pi=\Phi \circ \rho$.

$\left(  2\right)  \Rightarrow \left(  1\right)  $. Since $\Phi,$ $\hat{\pi}$
and $\hat{\rho}$ are weak*-weak* continuous, it follows that $\Phi \circ
\hat{\pi}=\Phi \circ \hat{\rho}$, so we see, for any $x\in{\mathcal{C}}%
_{0}^{\ast}(a)$ that%
\[
\Phi \left(  \hat{\pi}\left(  \chi_{(0,\infty)}\left(  \left \vert x\right \vert
\right)  \right)  \right)  =\Phi \left(  \hat{\rho}\left(  \chi_{(0,\infty
)}\left(  \left \vert x\right \vert \right)  \right)  \right)  ,
\]
which implies that $\chi_{(0,\infty)}\left(  \left \vert \pi \left(  x\right)
\right \vert \right)  $ and $\chi_{(0,\infty)}\left(  \left \vert \rho \left(
x\right)  \right \vert \right)  .$ Thus ${\mathcal{M}}$-$rank\  \pi
(x)={\mathcal{M}}$-$rank\  \rho(x)$.
\end{proof}

The following lemma is from \cite{DH}.

\begin{lemma}
\label{H-D1} \cite{DH}Suppose ${\mathcal{B}}=\Sigma_{m=1}^{t}{\mathcal{M}%
}_{k_{m}}(\mathbb{C})$ with matrix units $e_{i,j,m}$, $\mathcal{D}$ is a
unital C*-algebra, and $\pi,\rho:{\mathcal{B}}\rightarrow \mathcal{D}$ are
unital *-homomorphisms such that $\pi(e_{i,i,m})\sim \rho(e_{i,i,m})$. Then,
there exists a unitary $w\in \mathcal{D}$ such that $\pi \left(  \cdot \right)
=w^{\ast}\rho \left(  \cdot \right)  w.$
\end{lemma}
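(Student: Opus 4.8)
The plan is to build the intertwining unitary $w$ explicitly out of partial isometries implementing the given equivalences, and then to verify unitarity and the intertwining relation $w\pi(\cdot)w^{\ast}=\rho(\cdot)$ by a direct matrix-unit computation. First I would observe that it suffices to have one equivalence per block: fixing $m$, the elements $\pi(e_{i,1,m})$ are partial isometries in $\mathcal{D}$ witnessing $\pi(e_{i,i,m})\sim\pi(e_{1,1,m})$, and likewise $\rho(e_{i,i,m})\sim\rho(e_{1,1,m})$, so the hypothesis is equivalent to $\pi(e_{1,1,m})\sim\rho(e_{1,1,m})$ for each $m$. Accordingly, for $1\le m\le t$ I choose a partial isometry $v_{m}\in\mathcal{D}$ with $v_{m}^{\ast}v_{m}=\pi(e_{1,1,m})$ and $v_{m}v_{m}^{\ast}=\rho(e_{1,1,m})$.

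The key step is to set
\[
w=\sum_{m=1}^{t}\sum_{i=1}^{k_{m}}\rho(e_{i,1,m})\,v_{m}\,\pi(e_{1,i,m}),
\]
the point being that each summand transports the single isometry $v_{m}$ to a partial isometry from $\pi(e_{i,i,m})$ onto $\rho(e_{i,i,m})$, so that the pieces in block $m$ fit together coherently. To check $w^{\ast}w=1$ I would expand the product and use $\rho(e_{1,i,m})\rho(e_{j,1,n})=\delta_{mn}\delta_{ij}\rho(e_{1,1,m})$ to kill the cross terms, then apply $v_{m}^{\ast}\rho(e_{1,1,m})v_{m}=\pi(e_{1,1,m})$ together with the matrix-unit relations to reduce the sum to $\sum_{m,i}\pi(e_{i,i,m})=\pi(1)=1$; the computation $ww^{\ast}=\rho(1)=1$ is symmetric. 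Hence $w$ is a unitary.

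Finally, since the $e_{p,q,l}$ span $\mathcal{B}$ and $\pi,\rho$ are linear, it suffices to verify $w\pi(e_{p,q,l})=\rho(e_{p,q,l})w$ on matrix units. On the left, the factor $\pi(e_{1,i,m})\pi(e_{p,q,l})$ survives only when $m=l$ and $i=p$, leaving $\rho(e_{p,1,l})v_{l}\pi(e_{1,q,l})$; on the right, $\rho(e_{p,q,l})\rho(e_{i,1,m})$ survives only when $m=l$ and $i=q$, again leaving $\rho(e_{p,1,l})v_{l}\pi(e_{1,q,l})$, so the two sides agree. This yields $w\pi(\cdot)w^{\ast}=\rho(\cdot)$, equivalently $\pi(\cdot)=w^{\ast}\rho(\cdot)w$, as claimed.

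I do not anticipate a genuine obstacle: the only real content is writing down the correct formula for $w$, namely the coherent transport of a single $v_{m}$ across each block, after which both unitarity and the intertwining relation follow mechanically from the defining relations of the matrix units. The one point to keep straight is the orthogonality of the families $\{\pi(e_{1,1,m})\}_{m}$ and $\{\rho(e_{1,1,m})\}_{m}$, which is precisely what upgrades the sum of block partial isometries into a genuine unitary rather than merely a partial isometry.
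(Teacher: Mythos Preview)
Your argument is correct and is the standard construction: transport a single partial isometry $v_m$ per block via the matrix units and sum. The paper does not actually prove this lemma---it is quoted from \cite{DH} without proof---so there is nothing to compare against; your write-up would serve perfectly well as the omitted argument.
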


\begin{theorem}
\label{half1}Suppose $\mathcal{A}$ is a unital tracially nuclear C$^{\text{*}%
}$-algebra, $\mathcal{M}$ is a finite von Neumann algebra with center-valued
trace $\Phi$, and $\pi,\rho:{\mathcal{A}}\rightarrow{\mathcal{M}}$ are unital
*-homomorphisms. The following are equivalent:

\begin{enumerate}
\item For every $a\in \mathcal{A}$, ${\mathcal{M}}$-rank $\pi(a)={\mathcal{M}}%
$-rank $\rho(a)$.

\item $\Phi \circ \pi=\Phi \circ \rho.$

\item The representations $\pi$ and $\rho$ are weak* approximately equivalent.

\item There is a net $\{U_{n}\}$ of unitary operators in $\mathcal{M}$ such
that, for every $a\in \mathcal{A}^{\# \#}$,

\begin{enumerate}
\item $U_{n}\pi(a)U_{n}^{\ast}\rightarrow \rho(a)$ in the $\ast$strong operator
topology, and

\item $U_{n}^{\ast}\rho(a)U_{n}\rightarrow \pi(a)$ in the $\ast$strong operator topology.
\end{enumerate}
\end{enumerate}
\end{theorem}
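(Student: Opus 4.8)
The plan is to prove the four statements equivalent by establishing a cycle of implications, using the tracial nuclearity hypothesis only where it is essential. The natural order is $(3)\Rightarrow(2)\Rightarrow(1)\Rightarrow(4)\Rightarrow(3)$, with $(4)\Rightarrow(3)$ being essentially trivial since weak* approximate equivalence as defined requires only the convergence on elements of $\mathcal{A}$, which is a special case of the stronger $\ast$-SOT convergence on all of $\mathcal{A}^{\#\#}$ asserted in $(4)$. The implication $(3)\Rightarrow(2)$ should also be comparatively routine: if $U_n^\ast\pi(a)U_n\to\rho(a)$ in $\ast$-SOT and $\Phi$ is normal (hence $\ast$-SOT continuous on bounded sets) and satisfies $\Phi(U^\ast T U)=\Phi(T)$ for unitaries $U$ by the trace property of the center-valued trace, then $\Phi(\pi(a))=\Phi(U_n^\ast\pi(a)U_n)\to\Phi(\rho(a))$, giving $\Phi\circ\pi=\Phi\circ\rho$.

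\emph{The implication $(1)\Leftrightarrow(2)$} is where Lemma~\ref{H-D} enters. That lemma gives the equivalence of the rank condition and $\Phi\circ\pi=\Phi\circ\rho$ for homomorphisms out of a single singly-generated commutative algebra ${\mathcal{C}}_0^\ast(a)$. To get $(1)\Rightarrow(2)$ on all of $\mathcal{A}$, I would apply Lemma~\ref{H-D} to the restriction of $\pi$ and $\rho$ to ${\mathcal{C}}_0^\ast(a)$ for each self-adjoint $a\in\mathcal{A}$ with $0\le a\le 1$: the rank hypothesis restricts to give condition $(1)$ of the lemma, hence $\Phi(\pi(a))=\Phi(\rho(a))$ for every positive contraction, and since $\mathcal{A}$ is the linear span of its positive contractions, $\Phi\circ\pi=\Phi\circ\rho$. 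The reverse direction $(2)\Rightarrow(1)$ is immediate from Lemma~\ref{H-D} in the same way, since the rank of $\pi(a)$ depends only on the restriction to ${\mathcal{C}}_0^\ast(|a|)$.

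\emph{The heart of the theorem is $(2)\Rightarrow(4)$}, and this is where both tracial nuclearity and the hyperfiniteness of $(\mathcal{A}^{\#\#})_f$ from Theorem~\ref{2dual} must be used. The plan is to pass to the second dual: extend $\pi,\rho$ to normal homomorphisms $\hat\pi,\hat\rho:\mathcal{A}^{\#\#}\to\mathcal{M}$, and observe by Part~(1) of Theorem~\ref{2dual} that $P_\pi,P_\rho\le Q_{f,\mathcal{A}^{\#\#}}$, so both factor through $(\mathcal{A}^{\#\#})_f$, which is hyperfinite. On a hyperfinite von Neumann algebra one has a strong structural approximation: the algebra is an increasing union (in the appropriate sense) of finite-dimensional subalgebras. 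The strategy is to exhaust $(\mathcal{A}^{\#\#})_f$ by finite-dimensional subalgebras ${\mathcal{B}}_n\cong\Sigma_m{\mathcal{M}}_{k_m}(\mathbb{C})$ with matrix units, use $(2)$ together with the established $(2)\Rightarrow(1)$ to conclude that $\hat\pi$ and $\hat\rho$ send each minimal matrix unit projection $e_{i,i,m}$ to Murray--von Neumann equivalent projections in $\mathcal{M}$, and then apply Lemma~\ref{H-D1} to produce a unitary $w_n\in\mathcal{M}$ implementing $\hat\pi=w_n^\ast\hat\rho w_n$ on ${\mathcal{B}}_n$. As $n$ increases along the net of finite-dimensional subalgebras, the unitaries $w_n$ (or their adjoints) should give the required net for which $U_n\pi(a)U_n^\ast\to\rho(a)$ in $\ast$-SOT for every $a$.

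\emph{The main obstacle} will be the limiting argument in $(2)\Rightarrow(4)$: passing from exact unitary conjugation on each finite-dimensional $\mathcal{B}_n$ to asymptotic $\ast$-SOT convergence on all of $\mathcal{A}^{\#\#}$. This requires controlling how well the finite-dimensional subalgebras approximate a given element $a$ in the relevant $\|\cdot\|_{2,\gamma}$-seminorms, and arranging the index set so that the unitaries built on larger subalgebras continue to conjugate correctly on the smaller ones up to small $2$-norm error. The delicacy is that the matrix units and the approximating subalgebras live in $(\mathcal{A}^{\#\#})_f\subset\mathcal{M}$-image, so one must track simultaneously the hyperfinite approximation in $(\mathcal{A}^{\#\#})_f$ and its transport under $\hat\pi,\hat\rho$ into $\mathcal{M}$, ensuring the Murray--von Neumann equivalences supplied by $(1)$ are compatible across the increasing family so that Lemma~\ref{H-D1} can be applied coherently and the resulting unitaries converge in the sense demanded. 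Care is also needed because the net must work for both directions (a) and (b) simultaneously; taking $U_n=w_n^\ast$ handles one direction and its conjugate handles the other once symmetry of the hypotheses in $\pi$ and $\rho$ is invoked.
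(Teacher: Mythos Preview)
Your proposal is correct and follows essentially the same route as the paper: $(4)\Rightarrow(3)\Rightarrow(2)$ are easy, $(1)\Leftrightarrow(2)$ is Lemma~\ref{H-D}, and the main step $(2)\Rightarrow(4)$ proceeds by extending to $\hat\pi,\hat\rho$ on $(\mathcal{A}^{\#\#})_f$, using its hyperfiniteness (Theorem~\ref{2dual}) to approximate by finite-dimensional subalgebras, and applying Lemma~\ref{H-D1} to produce conjugating unitaries. One clarification regarding your stated obstacle: no coherence or compatibility among the unitaries across stages is required---the paper indexes the net by triples $(A,G,\varepsilon)$ (a finite set $A\subset\mathrm{ball}(\mathcal{A}^{\#\#})$, a finite set $G$ of tracial summands of $\mathcal{M}$, and a tolerance), builds a fresh unitary at each stage via Lemma~\ref{H-D1}, and the simultaneous two-sided convergence in (4)(a) and (4)(b) follows immediately from the unitary invariance of the $\|\cdot\|_{2,\beta_\gamma}$ seminorms, so no separate symmetry argument is needed.
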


\begin{proof}
Clearly, $\left(  4\right)  \Rightarrow \left(  3\right)  \Rightarrow \left(
2\right)  $.

$\left(  1\right)  \Leftrightarrow \left(  2\right)  $. This is proved in
\ref{H-D}.

$\left(  2\right)  \Rightarrow \left(  4\right)  $. Let $\hat{\pi},\hat{\rho
}:{\mathcal{A}}^{\sharp \sharp}\rightarrow{\mathcal{M}}$ be the weak*-weak*
continuous extensions of $\pi$ and ${\rho}$, respectively. Since $\Phi$ is
weak*-weak* continuous, we see that $\Phi \circ \hat{\pi}=\Phi \circ \hat{\rho}$.
Since $\mathcal{M}$ is finite, $\mathcal{M}$ can be written as ${\mathcal{M}%
}=\Sigma_{\gamma \in \Gamma}^{\oplus}({\mathcal{M}}_{\gamma},\beta_{\gamma})$,
where $\beta_{\gamma}$ is a faithful normal tracial state of ${\mathcal{M}%
}_{\gamma}$. Similarly, we can write $({\mathcal{A}}^{\sharp \sharp}%
)_{f}=\Sigma_{\lambda \in \Lambda}^{\oplus}({\mathcal{R}}_{\lambda}%
,\tau_{\lambda})$ where $\tau_{\lambda}$ is a faithful normal tracial state on
$\mathcal{R}_{\lambda}$ for each $\lambda \in \Lambda$. Thus $\mathcal{A}^{\#
\#}=\Sigma_{\lambda \in \Lambda}^{\oplus}({\mathcal{R}}_{\lambda},\tau_{\lambda
})\oplus \left(  A^{\# \#}\right)  _{i}$. If $S\in \mathcal{M}$ and
$T\in \mathcal{A}^{\# \#}$, we write%
\[
S=\sum_{\gamma \in \Gamma}S\left(  \gamma \right)  \text{ and }T=\sum_{\lambda
\in \Lambda}T\left(  \lambda \right)  \oplus T\left(  i\right)  \text{.}%
\]
Since $\mathcal{M}$ is finite, we know from \ref{2dual} that $\hat{\pi}\left(
Q_{f,\mathcal{A}^{\# \#}}\right)  =\hat{\rho}\left(  Q_{f,\mathcal{A}^{\# \#}%
}\right)  =1$. We also know that $\hat{\pi}$ and $\hat{\rho}$ are continuous
in the strong operator topology. Thus if $\left \{  T_{j}\right \}  $ is a
norm-bounded net in $\mathcal{A}^{\# \#}$ and $T\in \mathcal{A}^{\# \#},$ and
$T_{j}Q_{f,\mathcal{A}^{\# \#}}\rightarrow TQ_{f,\mathcal{A}^{\# \#}}$ in the
strong operator topology, then $\hat{\pi}\left(  T_{j}\right)  =\hat{\pi
}\left(  T_{j}Q_{f,\mathcal{A}^{\# \#}}\right)  \rightarrow \hat{\pi}\left(
T\right)  $ and $\hat{\rho}\left(  T_{j}\right)  \rightarrow \hat{\rho}\left(
T\right)  $ in the strong operator topology. This means that, if, for every
$\lambda \in \Lambda$, we have $\left \Vert T_{j}\left(  \lambda \right)
-T\left(  \lambda \right)  \right \Vert _{2,\tau_{\lambda}}\rightarrow0$, then,
for every $\gamma \in \Gamma$, we have
\[
\left \Vert \hat{\pi}\left(  T_{j}\right)  \left(  \gamma \right)  -\hat{\pi
}\left(  T\right)  \left(  \gamma \right)  \right \Vert _{2,\beta_{\gamma}%
}\rightarrow0\text{ and }\left \Vert \hat{\rho}\left(  T_{j}\right)  \left(
\gamma \right)  -\hat{\rho}\left(  T\right)  \left(  \gamma \right)  \right \Vert
_{2,\beta_{\gamma}}\rightarrow0\text{ .}%
\]

Suppose $A\subset$ \textrm{ball}$\left(  \mathcal{A}^{\# \#}\right)  $ is
finite, $L\subset \Lambda$ is finite and $\varepsilon>0.$ Then there is a
$\delta>0$ and a finite subset $G\subset \Gamma$ such that, if $T\in A,$
$S\in2$\textrm{ball}$\left(  \mathcal{A}^{\# \#}\right)  $ and, for every
$\lambda \in L$, we have $\left \Vert T\left(  \lambda \right)  -S\left(
\lambda \right)  \right \Vert _{2,\tau_{\lambda}}<\delta$, then
\[
\sum_{\gamma \in G}\left[  \left \Vert \hat{\pi}\left(  S\right)  \left(
\gamma \right)  -\hat{\pi}\left(  T\right)  \left(  \gamma \right)  \right \Vert
_{2,\beta_{\gamma}}+\left \Vert \hat{\rho}\left(  S\right)  \left(
\gamma \right)  -\hat{\rho}\left(  T\right)  \left(  \gamma \right)  \right \Vert
_{2,\beta_{\gamma}}\right]  <\varepsilon/37\text{ .}%
\]
Since $\mathcal{A}$ is tracially nuclear, we know that, for every $\lambda
\in \Lambda$, $\mathcal{R}_{\lambda}$ is hyperfinite. Thus, for each
$\lambda \in L$, there is a finite-dimensional unital C*-subalgebra
$\mathcal{B}_{\lambda}\subset \mathcal{R}_{\lambda}$ such that, for each $S\in
A$, there is a $B_{\lambda,S}\in \mathcal{B}_{\lambda}$ such that $\left \Vert
B_{\lambda,S}\right \Vert \leq \left \Vert S\left(  \lambda \right)  \right \Vert $
and $\left \Vert S\left(  \lambda \right)  -B_{\lambda,S}\right \Vert
_{2,\tau_{\lambda}}<\delta$. Then $\mathcal{B}=\sum_{\lambda \in L}^{\oplus
}\mathcal{B}_{\lambda}$ is a finite-dimensional C*-subalgebra of
$\mathcal{A}^{\# \#}$, and, for each $S\in A$, we define $B_{s}=\sum
_{\lambda \in L}^{\oplus}B_{\lambda,S}\in \mathcal{B}$. It follows that
\[
\sum_{S\in A}\sum_{\gamma \in G}\left[  \left \Vert \hat{\pi}\left(  S\right)
\left(  \gamma \right)  -\hat{\pi}\left(  B_{S}\right)  \left(  \gamma \right)
\right \Vert _{2,\beta_{\gamma}}+\left \Vert \hat{\rho}\left(  S\right)  \left(
\gamma \right)  -\hat{\rho}\left(  B_{S}\right)  \left(  \gamma \right)
\right \Vert _{2,\beta_{\gamma}}\right]  <\varepsilon/37\text{ .}%
\]
We know from $\Phi \circ \hat{\pi}=\Phi \circ \hat{\rho}$ and Lemma \ref{H-D1}
that there is a unitary operator $U=U_{\left(  A,G,\varepsilon \right)  }%
\in \mathcal{M}$ such that, for every $W\in \mathcal{B}$,%
\[
U\hat{\pi}\left(  W\right)  U^{\ast}=\hat{\rho}\left(  W\right)  \text{ .}%
\]
We therefore have,%
\[
\sum_{S\in A}\sum_{\gamma \in G}\left \Vert U\hat{\pi}\left(  S\right)  U^{\ast
}\left(  \gamma \right)  -\hat{\rho}\left(  S\right)  \left(  \gamma \right)
\right \Vert _{2,\beta_{\gamma}}%
\]%
\[
\leq \sum_{S\in A}\sum_{\gamma \in G}\left[  \left \Vert U\left(  \hat{\pi
}\left(  S\right)  \left(  \gamma \right)  -\hat{\pi}\left(  B_{S}\right)
\left(  \gamma \right)  \right)  U^{\ast}+\right \Vert _{2,\beta_{\gamma}%
}+\left \Vert \hat{\rho}\left(  B_{S}\right)  \left(  \gamma \right)  -\hat
{\rho}\left(  S\right)  \left(  \gamma \right)  \right \Vert _{2,\beta_{\gamma}%
}\right]
\]%
\[
\leq \varepsilon/37+\varepsilon/37<\varepsilon \text{.}%
\]
Also
\[
\sum_{S\in A}\sum_{\gamma \in G}\left \Vert \hat{\pi}\left(  S\right)  \left(
\gamma \right)  -U^{\ast}\hat{\rho}\left(  S\right)  U\left(  \gamma \right)
\right \Vert _{2,\beta_{\gamma}}%
\]%
\[
=\sum_{S\in A}\sum_{\gamma \in G}\left \Vert U\hat{\pi}\left(  S\right)
U^{\ast}\left(  \gamma \right)  -\hat{\rho}\left(  S\right)  \left(
\gamma \right)  \right \Vert _{2,\beta_{\gamma}}<\varepsilon \text{ .}%
\]
If we order the triples $\left(  A,G,\varepsilon \right)  $ by $\left(
\subset,\subset,>\right)  $, we have a net $\left \{  U_{\left(
A,G,\varepsilon \right)  }\right \}  $ of unitary operators in $\mathcal{M}$
such that, for every $T\in \mathcal{A}^{\# \#}$,
\[
U_{\left(  A,G,\varepsilon \right)  }\hat{\pi}(T)U_{\left(  A,G,\varepsilon
\right)  }^{\ast}\rightarrow \hat{\rho}(T)\text{ and }U_{\left(
A,G,\varepsilon \right)  }^{\ast}\hat{\rho}(T)U_{\left(  A,G,\varepsilon
\right)  }\rightarrow \hat{\pi}(T)\text{ .}%
\]
in the strong operator topology.
\end{proof}

\bigskip

\section{FWU algebras: A converse}

In this section we prove a converse of Theorem \ref{half1} when $\mathcal{A}$
is separable. We say that a unital C*-algebra $\mathcal{A}$ is an \emph{FWU}
\emph{algebra}, or that $\mathcal{A}$ has the \emph{finite weak*-uniqueness
property}, if, for every finite von Neumann algebra $\mathcal{M}$ with a
faithful normal tracial state $\tau$ and every pair $\pi,\rho:\mathcal{A}%
\rightarrow \mathcal{M}$ of unital $\ast$-homomorphisms such that, for all
$a\in \mathcal{A}$,
\[
\mathcal{M}\text{-rank}\left(  \pi \left(  a\right)  \right)  =\mathcal{M}%
\text{-rank}\left(  \rho \left(  a\right)  \right)  ,
\]
there is a net $\left \{  U_{i}\right \}  $ of unitary operators in
$\mathcal{M}$, such that, for every $a\in \mathcal{A}$,%
\[
\left \Vert U_{i}\pi \left(  a\right)  U_{i}^{\ast}-\rho \left(  a\right)
\right \Vert _{2,\tau}\rightarrow0\text{.}%
\]
Since every finite von Neumann algebra is a direct sum of algebras having a
faithful normal tracial state \cite{T}, being an FWU algebra is equivalent to
saying that for every finite von Neumann algebra and every pair $\pi
,\rho:\mathcal{A}\rightarrow \mathcal{M}$ of unital $\ast$-homomorphisms such
that, for all $a\in \mathcal{A}$,
\[
\mathcal{M}\text{-rank}\left(  \pi \left(  a\right)  \right)  =\mathcal{M}%
\text{-rank}\left(  \rho \left(  a\right)  \right)  ,
\]
we have that $\pi$ and $\rho$ are weak* approximately unitarily equivalent.

A key ingredient is a result of Alain Connes \cite{C}, who proved the
following characterization of hyperfiniteness. If $\mathcal{N}$ is a von
Neumann algebra, then the \emph{flip automorphism} $\pi:\mathcal{N}%
\otimes \mathcal{N}\rightarrow \mathcal{N}\otimes \mathcal{N}$ is the
automorphism defined by $\pi \left(  a\otimes b\right)  =b\otimes a$.

\begin{theorem}
\label{Connes}\cite{C}Suppose $\mathcal{N}\subset B\left(  H\right)  $ is a
$II_{1}$ factor von Neumann algebra acting on a separable Hilbert space. The
following are equivalent:

\begin{enumerate}
\item $\mathcal{N}$ is hyperfinite,

\item For every $n\in \mathbb{N}$, $x_{1},\ldots,x_{n}\in \mathcal{N}$,
$y_{1},\ldots,y_{n}\in \mathcal{N}^{\prime}$,%
\[
\left \Vert \sum_{k=1}^{n}x_{k}y_{k}\right \Vert _{H}=\left \Vert \sum_{k=1}%
^{n}x_{k}\otimes y_{k}\right \Vert _{H\otimes H}\text{ ,}%
\]

\item The flip automorphism $\pi$ on $\mathcal{N}\otimes \mathcal{N}$ is weak*
approximately unitarily equivalent in $\mathcal{N}\otimes \mathcal{N}$ to the
identity representation.
\end{enumerate}
\end{theorem}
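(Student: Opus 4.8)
The plan is to treat the three conditions as Connes' equivalent characterizations of hyperfiniteness and to organize the proof around the forward implications from (1), which are concrete, together with the reverse implications, which are the substance of Connes' classification of injective factors. First I would identify what (2) and (3) really assert. Since $\mathcal{N}$ and $\mathcal{N}^{\prime}$ commute, the multiplication map $m:\mathcal{N}\odot\mathcal{N}^{\prime}\rightarrow B(H)$, $x\otimes y\mapsto xy$, is a $\ast$-homomorphism on the algebraic tensor product, so it extends to a contraction on the minimal tensor product $\mathcal{N}\otimes_{\min}\mathcal{N}^{\prime}$; as the spatial norm of $\sum_{k}x_{k}\otimes y_{k}$ is computed exactly on $H\otimes H$, the inequality $\|\sum_{k}x_{k}y_{k}\|_{H}\leq\|\sum_{k}x_{k}\otimes y_{k}\|_{H\otimes H}$ holds automatically, and (2) is precisely the assertion that $m$ is isometric, i.e.\ the Effros--Lance form of semidiscreteness. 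Condition (3) is the approximate innerness of the flip.

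For the forward directions $(1)\Rightarrow(2)$ and $(1)\Rightarrow(3)$ I would exploit the matricial structure. Write $\mathcal{N}=\left(\bigcup_{\lambda}\mathcal{B}_{\lambda}\right)^{\prime\prime}$ as the weak* closure of an increasing net of finite-dimensional subalgebras, with $\|\cdot\|_{2}$-density supplied by Kaplansky. For (3), on each $M_{k}\otimes M_{k}\subset\mathcal{N}\otimes\mathcal{N}$ the flip is implemented by the swap unitary $u_{k}=\sum_{i,j}e_{ij}\otimes e_{ji}$, and letting $\mathcal{B}_{\lambda}$ exhaust $\mathcal{N}$ produces a net of such unitaries in $\mathcal{N}\otimes\mathcal{N}$ carrying the identity representation to the flip in $\|\cdot\|_{2}$, hence weak* (the $\ast$-SOT upgrade being automatic, as recorded earlier for weak* approximate equivalence). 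For (2), the reverse inequality follows from the semidiscreteness of hyperfinite algebras: approximating $\mathrm{id}_{\mathcal{N}}$ in the point-$\|\cdot\|_{2}$ sense by normal unital completely positive maps that factor through matrix algebras, one uses that completely positive maps are contractive on the minimal tensor product and that the identity $\|m(\cdot)\|=\|\cdot\|_{\min}$ is elementary for matrix algebras, then passes to the limit to obtain equality.

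The main obstacle is the reverse direction, $(2)\Rightarrow(1)$ (equivalently $(3)\Rightarrow(1)$), which is the hard core of Connes' theorem and cannot be reached by finite-dimensional bookkeeping. Here the strategy is to extract injectivity: from the isometric tensor identity in (2) one builds, by a Hahn--Banach separation or invariant-mean argument, a norm-one projection (a hypertrace) $B(H)\rightarrow\mathcal{N}$, so that $\mathcal{N}$ is injective; from the approximate innerness of the flip in (3) one likewise produces such an expectation by averaging $B(H)$ against the approximating unitaries. One then invokes Connes' theorem that an injective $II_{1}$ factor acting on a separable Hilbert space is hyperfinite to conclude (1). I expect this last implication---injective $\Rightarrow$ hyperfinite---to be the genuine difficulty, since it is precisely the deepest part of \cite{C}; everything preceding it is either automatic or a routine finite-dimensional approximation.
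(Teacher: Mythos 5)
The first thing to say is that the paper does not prove this statement at all: it is Connes' theorem, quoted from \cite{C} and used as a black box (its only role is to feed into Theorem \ref{Connes2}). So there is no proof in the paper to match your proposal against. Your overall architecture --- prove $(1)\Rightarrow(2)$ and $(1)\Rightarrow(3)$ by finite-dimensional approximation with swap unitaries and semidiscreteness, then get $(2)\Rightarrow(1)$ and $(3)\Rightarrow(1)$ by extracting injectivity and citing Connes' ``injective $\Rightarrow$ hyperfinite'' --- is the standard one, and since that last implication is the deep core of \cite{C}, deferring it to Connes leaves you in essentially the same position as the paper itself. The $(1)\Rightarrow(3)$ sketch (exhausting $\mathcal{N}$ by matrix subalgebras and using the unitaries $u_{k}=\sum_{i,j}e_{ij}\otimes e_{ji}$, plus Kaplansky density) is fine.

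There is, however, a genuine error in your identification of condition (2), and it sits exactly where the theorem has its content. You claim that because $\mathcal{N}$ and $\mathcal{N}^{\prime}$ commute, the multiplication map $m:\mathcal{N}\odot\mathcal{N}^{\prime}\rightarrow B(H)$ ``extends to a contraction on the minimal tensor product,'' so that $\Vert\sum_{k}x_{k}y_{k}\Vert_{H}\leq\Vert\sum_{k}x_{k}\otimes y_{k}\Vert_{H\otimes H}$ is automatic. That is false: the universal property of commuting representations is a property of the \emph{maximal} tensor product, so $m$ is automatically contractive for $\Vert\cdot\Vert_{\max}$, not for $\Vert\cdot\Vert_{\min}$. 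Min-contractivity of $m$ is precisely the Effros--Lance semidiscreteness condition, i.e.\ it is the hard inequality equivalent to hyperfiniteness, not the free one. The inequality that \emph{is} automatic goes the other way: since $\mathcal{N}$ is a factor, $m$ is injective on $\mathcal{N}\odot\mathcal{N}^{\prime}$ (Murray--von Neumann), hence $t\mapsto\Vert m(t)\Vert$ is a C*-norm on the algebraic tensor product and dominates the spatial norm by Takesaki's minimality theorem, giving $\Vert\sum_{k}x_{k}y_{k}\Vert_{H}\geq\Vert\sum_{k}x_{k}\otimes y_{k}\Vert_{H\otimes H}$ always. Your $(1)\Rightarrow(2)$ step then compounds the swap: the semidiscreteness argument you sketch (factoring $\mathrm{id}_{\mathcal{N}}$ through matrix algebras, cp maps contractive on $\otimes_{\min}$) proves the $\leq$ inequality --- the one you declared automatic --- and does not prove the $\geq$ inequality you assign it to. All the pieces of a correct proof are nearby, but the two inequalities and their justifications are crossed, and the ``automatic'' step rests on a false principle. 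It is worth noting that the paper's own Theorem \ref{Connes2}, the result it actually proves in this vicinity, avoids this trap entirely by deriving \emph{both} inequalities directly from the approximating unitaries rather than declaring either one automatic.
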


\bigskip

If $\mathcal{N}$ is a von Neumann algebra and the flip automorphism $\pi$ is
weak* approximately equivalent to the identity, it easily follows that the
implementing net $\left \{  U_{\lambda}\right \}  $ of unitaries simultaneously
makes the maps $\rho_{1},\rho_{2}:\mathcal{N}\rightarrow \mathcal{N}%
\otimes \mathcal{N}$ by%
\[
\rho_{1}\left(  a\right)  =a\otimes1,\text{ }\rho_{2}\left(  a\right)
=1\otimes a\text{ for every }a\in \mathcal{N}\text{.}%
\]
weak* approximately equivalent. Connes' proof actually yields a seemingly
stronger statement. This statement fills in the details of Remark 7 in
\cite{DostH}.

\begin{theorem}
\label{Connes2}Suppose $\mathcal{N}\subset B\left(  H\right)  $ is a finite
factor von Neumann algebra acting on a separable Hilbert space $H$. Define
$\rho_{1},\rho_{2}:\mathcal{N}\rightarrow \mathcal{N}\otimes \mathcal{N}$ by%
\[
\rho_{1}\left(  a\right)  =a\otimes1,\text{ }\rho_{2}\left(  a\right)
=1\otimes a\text{ for every }a\in \mathcal{N}\text{.}%
\]
Suppose $\mathfrak{\rho}_{1}$ and $\rho_{2}$ are weak* approximately
equivalent in $\mathcal{N}\otimes \mathcal{N}$. Then $\mathcal{N}$ is hyperfinite.
\end{theorem}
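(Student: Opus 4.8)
The plan is to reduce to the case of a $II_{1}$ factor and then manufacture a hypertrace on $B(H)$, i.e. an $\mathcal{N}$-central state restricting to $\tau$, which by Connes' characterization of injectivity/hyperfiniteness (the equivalences recorded in Theorem \ref{Connes}) forces $\mathcal{N}$ to be hyperfinite. If $\mathcal{N}$ is finite-dimensional it is automatically hyperfinite, so I may assume $\mathcal{N}$ is a $II_{1}$ factor. Since hyperfiniteness and weak* approximate equivalence in $\mathcal{N}\otimes\mathcal{N}$ are intrinsic to the algebras involved, I may place $\mathcal{N}$ in standard form on $H=L^{2}(\mathcal{N},\tau)$, which is separable because $\mathcal{N}$ has separable predual; I write $\Omega_{0}$ for the trace vector and $J$ for the modular conjugation, so that $\mathcal{N}'=J\mathcal{N}J$. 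Then $\mathcal{N}\otimes\mathcal{N}$ is in standard form on $H\otimes H$ with trace vector $\Omega=\Omega_{0}\otimes\Omega_{0}$ and commutant $\mathcal{N}'\otimes\mathcal{N}'=(\mathcal{N}\otimes\mathcal{N})'$.

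Next I would extract from the hypothesis the net $\{U_{\lambda}\}$ of unitaries in $\mathcal{N}\otimes\mathcal{N}$ with $U_{\lambda}^{\ast}(a\otimes1)U_{\lambda}\to1\otimes a$ in the $\ast$-SOT for every $a\in\mathcal{N}$ (one half of the weak* approximate equivalence of $\rho_{1}$ and $\rho_{2}$). On $B(H)$ I define the states $\phi_{\lambda}(T)=\langle(T\otimes1)U_{\lambda}\Omega,U_{\lambda}\Omega\rangle=\langle U_{\lambda}^{\ast}(T\otimes1)U_{\lambda}\Omega,\Omega\rangle$ and let $\phi$ be any weak* cluster point, which exists by weak* compactness of the state space. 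Each $\phi_{\lambda}$ is a state, and since $U_{\lambda}^{\ast}(a\otimes1)U_{\lambda}\Omega\to(1\otimes a)\Omega$ in norm one gets $\phi_{\lambda}(a)\to\langle(1\otimes a)\Omega,\Omega\rangle=\tau(a)$, so $\phi|_{\mathcal{N}}=\tau$.

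The heart of the argument is to show $\phi$ is $\mathcal{N}$-central, i.e. $\phi(aT)=\phi(Ta)$ for $a\in\mathcal{N}$, $T\in B(H)$. Writing $A_{\lambda}=U_{\lambda}^{\ast}(a\otimes1)U_{\lambda}$ and $B_{\lambda}=U_{\lambda}^{\ast}(T\otimes1)U_{\lambda}$, multiplicativity gives $\phi_{\lambda}(aT)=\langle A_{\lambda}B_{\lambda}\Omega,\Omega\rangle$ and $\phi_{\lambda}(Ta)=\langle B_{\lambda}A_{\lambda}\Omega,\Omega\rangle$. Because $A_{\lambda}\Omega\to(1\otimes a)\Omega$ and $A_{\lambda}^{\ast}\Omega\to(1\otimes a^{\ast})\Omega$ in norm while $\|B_{\lambda}\|\le\|T\|$, both expressions reduce, up to a vanishing error, to comparing $\langle(1\otimes a)B_{\lambda}\Omega,\Omega\rangle$ with $\langle B_{\lambda}(1\otimes a)\Omega,\Omega\rangle$. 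These are in fact equal for every $\lambda$: setting $a'=1\otimes Ja^{\ast}J\in\mathcal{N}'\otimes\mathcal{N}'=(\mathcal{N}\otimes\mathcal{N})'$, the trace property of $\Omega$ gives $(1\otimes a)\Omega=a'\Omega$ and $a'^{\ast}\Omega=(1\otimes a^{\ast})\Omega$, and since $a'$ commutes with $T\otimes1$ and with $U_{\lambda}$ it commutes with $B_{\lambda}$; hence $\langle B_{\lambda}(1\otimes a)\Omega,\Omega\rangle=\langle a'B_{\lambda}\Omega,\Omega\rangle=\langle B_{\lambda}\Omega,a'^{\ast}\Omega\rangle=\langle(1\otimes a)B_{\lambda}\Omega,\Omega\rangle$. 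Thus $\phi_{\lambda}(aT)-\phi_{\lambda}(Ta)\to0$, so $\phi$ is the desired hypertrace and $\mathcal{N}$ is hyperfinite.

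I expect the main obstacle to be exactly this last cancellation. The naive route---deducing from $\rho_{1}\approx\rho_{2}$ that the flip on $\mathcal{N}\otimes\mathcal{N}$ is approximately inner and then quoting Theorem \ref{Connes}(3)---seems to require controlling $U_{\lambda}^{\ast}(\,\cdot\otimes1)U_{\lambda}$ and $U_{\lambda}^{\ast}(1\otimes\,\cdot)U_{\lambda}$ simultaneously, which the hypothesis does not obviously supply; one cannot in general force the second leg from the first by an approximate-commutation argument, since approximate commutation with $1\otimes\mathcal{N}$ need not place an element near the relative commutant $\mathcal{N}\otimes1$. The content of the computation above is that a single net on one tensor leg already suffices: moving $1\otimes a$ into the commutant through the trace vector $\Omega$ lets that one net build the central state directly, bypassing the flip entirely.
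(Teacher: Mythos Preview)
Your argument is correct. One small citation issue: the hypertrace characterization you invoke is indeed Connes', but it is not among the three equivalences actually listed in Theorem~\ref{Connes} of this paper, so the parenthetical ``(the equivalences recorded in Theorem~\ref{Connes})'' should point to \cite{C} directly rather than to Theorem~\ref{Connes}.

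Your route is genuinely different from the paper's. The paper does not build a hypertrace and does not pass to standard form; instead it verifies condition~(2) of Theorem~\ref{Connes} directly. Starting from the same one-sided hypothesis $U_{\lambda}^{\ast}(a\otimes1)U_{\lambda}\to1\otimes a$, the paper observes that each $1\otimes y_{k}$ with $y_{k}\in\mathcal{N}'$ lies in $(\mathcal{N}\otimes\mathcal{N})'$ and hence commutes with $U_{\lambda}$; conjugating $\sum x_{k}\otimes y_{k}$ therefore converges weak* to $1\otimes\sum x_{k}y_{k}$, giving $\|\sum x_{k}y_{k}\|\le\|\sum x_{k}\otimes y_{k}\|$. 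The reverse inequality comes from the trace-norm identity $\|a\otimes1-U_{\lambda}(1\otimes a)U_{\lambda}^{\ast}\|_{2}=\|U_{\lambda}^{\ast}(a\otimes1)U_{\lambda}-1\otimes a\|_{2}$, which turns the single net into one that also sends $U_{\lambda}(1\otimes a)U_{\lambda}^{\ast}\to a\otimes1$, and the same commutant trick run in reverse yields $\|\sum x_{k}\otimes y_{k}\|\le\|\sum x_{k}y_{k}\|$. So your worry in the last paragraph---that one cannot control both tensor legs with the same net---is in fact handled by the paper, not by forcing the flip, but by this $\|\cdot\|_{2}$-symmetry. Both proofs ultimately rest on the same observation that $1\otimes\mathcal{N}'$ commutes with the $U_{\lambda}$; the paper feeds it into a norm estimate, while you feed it through the trace vector to produce an $\mathcal{N}$-central state. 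Your approach costs the passage to standard form and an appeal to a Connes equivalence not stated in the paper; the paper's approach is slightly more self-contained relative to the version of Theorem~\ref{Connes} it records.
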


\begin{proof}
Let $\tau$ be the unique faithful normal tracial state on $\mathcal{N}$. Then
$\tau \otimes \tau$ is a faithful normal tracial state on the factor
$\mathcal{N}\otimes \mathcal{N}\subset B\left(  H\otimes H\right)  $. Suppose
$\mathfrak{\rho}_{1}$ and $\rho_{2}$ are weak* approximately equivalent in
$\mathcal{N}\otimes \mathcal{N}$. Thus we can choose a net $\left \{
U_{\lambda}\right \}  $ of unitary operators in $\mathcal{N}\otimes \mathcal{N}$
such that, for every $a\in N,$%
\[
\left \Vert U_{\lambda}^{\ast}\left(  a\otimes1\right)  U_{\lambda}-\left(
1\otimes a\right)  \right \Vert _{2,\tau \otimes \tau}\rightarrow0.
\]
Suppose $n\in \mathbb{N}$, $x_{1},\ldots,x_{n}\in \mathcal{N}$, $y_{1}%
,\ldots,y_{n}\in \mathcal{N}^{\prime}$. Since $U_{\lambda}\in \mathcal{N}%
\otimes \mathcal{N}$ and each $1\otimes y_{k}\in \left(  \mathcal{N}%
\otimes \mathcal{N}\right)  ^{\prime}$, we have%
\[
U_{\lambda}^{\ast}\left(  \sum_{k=1}^{n}x_{k}\otimes y_{k}\right)  U_{\lambda
}=\sum_{k=1}^{n}U_{\lambda}^{\ast}\left(  x_{k}\otimes1\right)  \left(
1\otimes y_{k}\right)  U_{\lambda}=
\]%
\[
\sum_{k=1}^{n}\left[  U_{\lambda}^{\ast}\left(  x_{k}\otimes1\right)
U_{\lambda}\right]  \left(  1\otimes y_{k}\right)  \overset
{\text{\textrm{weak*}}}{\rightarrow}\sum_{k=1}^{n}\left(  1\otimes
x_{k}\right)  \left(  1\otimes y_{k}\right)  =1\otimes \left(  \sum_{k=1}%
^{n}x_{k}y_{k}\right)  .
\]
Since, for every $\lambda,$
\[
\left \Vert U_{\lambda}^{\ast}\left(  \sum_{k=1}^{n}x_{k}\otimes y_{k}\right)
U_{\lambda}\right \Vert =\left \Vert \sum_{k=1}^{n}x_{k}\otimes y_{k}\right \Vert
,
\]
it follows that
\[
\left \Vert \sum_{k=1}^{n}x_{k}y_{k}\right \Vert \leq \left \Vert \sum_{k=1}%
^{n}x_{k}\otimes y_{k}\right \Vert \text{ .}%
\]
It also follows that, for every $a\in \mathcal{N}$,%
\[
\left \Vert a\otimes1-U_{\lambda}\left(  1\otimes a\right)  U_{\lambda}^{\ast
}\right \Vert _{2,\tau \otimes \tau}=\left \Vert U_{\lambda}^{\ast}\left(
a\otimes1\right)  U_{\lambda}-\left(  1\otimes a\right)  \right \Vert
_{2,\tau \otimes \tau}\rightarrow0\text{ .}%
\]
Thus%
\[
U_{\lambda}\left[  1\otimes \left(  \sum_{k=1}^{n}x_{k}y_{k}\right)  \right]
U_{\lambda}^{\ast}=U_{\lambda}\sum_{k=1}^{n}\left(  1\otimes x_{k}\right)
\left(  1\otimes y_{k}\right)  U_{\lambda}^{\ast}%
\]%
\[
=\sum_{k=1}^{n}U_{\lambda}\left(  1\otimes x_{k}\right)  U_{\lambda}^{\ast
}\left(  1\otimes y_{k}\right)  \overset{\text{\textrm{weak*}}}{\rightarrow
}\sum_{k=1}^{n}\left(  x_{k}\otimes1\right)  (1\otimes y_{k})=\sum_{k=1}%
^{n}x_{k}\otimes y_{k}.
\]
Thus%
\[
\left \Vert \sum_{k=1}^{n}x_{k}\otimes y_{k}\right \Vert \leq \left \Vert
\sum_{k=1}^{n}x_{k}y_{k}\right \Vert \text{.}%
\]
Thus by Connes' theorem (Theorem \ref{Connes}), $\mathcal{N}$ is hyperfinite.
\end{proof}

\bigskip

We now prove our converse result.

\begin{theorem}
\label{half2}A separable unital C*-algebra is an FWU algebra if and only if it
is tracially nuclear.
\end{theorem}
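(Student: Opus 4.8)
The plan is to prove the two implications separately, obtaining the forward direction almost immediately from Theorem \ref{half1} and reducing the converse to Connes' criterion via Theorem \ref{Connes2} and Lemma \ref{factor}.

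First I would dispose of the implication that every tracially nuclear algebra is FWU, for which separability is not even needed. Let $\mathcal{M}$ be a finite von Neumann algebra with a faithful normal tracial state $\tau$, and let $\pi,\rho:\mathcal{A}\rightarrow\mathcal{M}$ be unital $*$-homomorphisms with $\mathcal{M}$-rank$\,\pi(a)=\mathcal{M}$-rank$\,\rho(a)$ for all $a$. Then condition $(1)$ of Theorem \ref{half1} holds, so condition $(4)$ supplies a net $\{U_{n}\}$ of unitaries in $\mathcal{M}$ with $U_{n}\pi(a)U_{n}^{\ast}\rightarrow\rho(a)$ in the $\ast$-strong operator topology. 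Since these nets are norm-bounded, the remarks of Section 2 show that $\ast$-SOT convergence coincides with convergence in $\left\Vert\cdot\right\Vert_{2,\tau}$, which is exactly the FWU conclusion.

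For the converse, suppose $\mathcal{A}$ is a separable FWU algebra. By Lemma \ref{factor} it suffices to prove that $N:=\pi_{\tau}(\mathcal{A})^{\prime\prime}$ is hyperfinite for every infinite-dimensional factor tracial state $\tau$, so I may assume $N$ is a $II_{1}$ factor on a separable Hilbert space $H$ with unique faithful normal trace (again denoted $\tau$). Set $\mathcal{M}=N\otimes N$, a $II_{1}$ factor on the separable space $H\otimes H$ with faithful normal trace $\tau\otimes\tau$, and define $\pi,\rho:\mathcal{A}\rightarrow\mathcal{M}$ by $\pi(a)=\pi_{\tau}(a)\otimes1$ and $\rho(a)=1\otimes\pi_{\tau}(a)$. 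Because $\mathcal{M}$ is a factor, $\mathcal{M}$-rank is determined by the $(\tau\otimes\tau)$-trace of the range projection; if $e$ is the range projection of $\pi_{\tau}(a)$, then the range projections of $\pi(a)$ and $\rho(a)$ are $e\otimes1$ and $1\otimes e$, each of trace $\tau(e)$. Hence $\mathcal{M}$-rank$\,\pi(a)=\mathcal{M}$-rank$\,\rho(a)$ for all $a$, and the FWU hypothesis furnishes a net $\{U_{\lambda}\}$ of unitaries in $\mathcal{M}$ with $\left\Vert U_{\lambda}\pi(a)U_{\lambda}^{\ast}-\rho(a)\right\Vert_{2,\tau\otimes\tau}\rightarrow0$ for every $a\in\mathcal{A}$.

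The heart of the argument, and the only step I expect to require genuine work, is upgrading this to weak* approximate equivalence of $\rho_{1},\rho_{2}:N\rightarrow\mathcal{M}$ given by $\rho_{1}(x)=x\otimes1$, $\rho_{2}(x)=1\otimes x$, so that Theorem \ref{Connes2} forces $N$ to be hyperfinite; this amounts to passing from $\pi_{\tau}(\mathcal{A})$ to all of $N$. I would fix a finite set $F\subset\mathrm{ball}(N)$ and $\varepsilon>0$, use the Kaplansky density theorem to choose for each $x\in F$ an $a_{x}\in\mathcal{A}$ with $\left\Vert\pi_{\tau}(a_{x})\right\Vert\leq1$ and $\left\Vert\pi_{\tau}(a_{x})-x\right\Vert_{2,\tau}<\varepsilon/3$, and estimate
\[
\left\Vert U_{\lambda}(x\otimes1)U_{\lambda}^{\ast}-1\otimes x\right\Vert_{2,\tau\otimes\tau}\leq\left\Vert x-\pi_{\tau}(a_{x})\right\Vert_{2,\tau}+\left\Vert U_{\lambda}(\pi_{\tau}(a_{x})\otimes1)U_{\lambda}^{\ast}-1\otimes\pi_{\tau}(a_{x})\right\Vert_{2,\tau\otimes\tau}+\left\Vert\pi_{\tau}(a_{x})-x\right\Vert_{2,\tau},
\]
using the unitary invariance of $\left\Vert\cdot\right\Vert_{2,\tau\otimes\tau}$ and the identity $\left\Vert y\otimes1\right\Vert_{2,\tau\otimes\tau}=\left\Vert1\otimes y\right\Vert_{2,\tau\otimes\tau}=\left\Vert y\right\Vert_{2,\tau}$. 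Choosing $\lambda$ large drives the middle term below $\varepsilon/3$, so reindexing the $U_{\lambda}$ by the directed set of pairs $(F,\varepsilon)$ yields a net implementing weak* approximate equivalence of $\rho_{1}$ and $\rho_{2}$ (the reverse direction coming for free by replacing $U_{\lambda}$ with $U_{\lambda}^{\ast}$, again by unitary invariance of the trace norm). Theorem \ref{Connes2} then gives that $N$ is hyperfinite, and Lemma \ref{factor} completes the proof.
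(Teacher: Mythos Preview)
Your proposal is correct and follows essentially the same route as the paper: the tracially nuclear $\Rightarrow$ FWU direction is immediate from Theorem~\ref{half1}, and for the converse you reduce via Lemma~\ref{factor} to a $II_{1}$ factor $N$, compare the two embeddings $\pi_{\tau}(a)\otimes 1$ and $1\otimes\pi_{\tau}(a)$ into $N\otimes N$, verify the rank hypothesis, apply FWU, extend by $\|\cdot\|_{2}$-density of $\pi_{\tau}(\mathcal{A})$ in $N$, and finish with Theorem~\ref{Connes2}. The only cosmetic differences are that you check the $\mathcal{M}$-rank condition by computing the range projections $e\otimes 1$ and $1\otimes e$ directly (the paper instead notes $(\tau\otimes\tau)\circ\sigma_{1}=(\tau\otimes\tau)\circ\sigma_{2}$, implicitly invoking Lemma~\ref{H-D} for the factor case), and you spell out the extension to $N$ as an explicit Kaplansky/$\varepsilon$-thirds estimate rather than the paper's one-line continuity observation.
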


\begin{proof}
Suppose $\mathcal{A}$ is an FWU algebra. Suppose $\tau$ is a factor tracial
state on $\mathcal{A}$. Let $\mathcal{N}=\pi_{\tau}\left(  \mathcal{A}\right)
^{\prime \prime}$. Since $\mathcal{A}$ is separable and $\pi_{\tau}$ has a
cyclic vector, $\mathcal{N}$ acts on a separable Hilbert space. If
$\mathcal{N}$ is finite-dimensional, then $\mathcal{N}$ is hyperfinite. Thus
we can assume that $\mathcal{N}$ is a $II_{1}$ factor. Then $\mathcal{N}%
\subset L^{2}\left(  \mathcal{A},\tau \right)  $ and $\pi_{\tau}\left(
\mathcal{A}\right)  $ is $\left \Vert {}\right \Vert _{2,\tau}$-dense in
$\mathcal{N}$. Define $\rho_{1},\rho_{2}:\mathcal{N}\rightarrow \mathcal{N}%
\otimes \mathcal{N}$ by%
\[
\rho_{1}\left(  b\right)  =b\otimes1,\text{ }\rho_{2}\left(  b\right)
=1\otimes b\text{ for every }b\in \mathcal{N}\text{.}%
\]
For $k=1,2,$ let $\sigma_{k}=\rho_{k}\circ \pi_{\tau}$:$\mathcal{A}%
\rightarrow \mathcal{N}\otimes \mathcal{N}$. Since $\left(  \tau \otimes
\tau \right)  \circ \rho_{1}=\left(  \tau \otimes \tau \right)  \circ \rho_{2},$ we
see that $\left(  \tau \otimes \tau \right)  \circ \sigma_{1}=\left(  \tau
\otimes \tau \right)  \circ \sigma_{2}.$ Since $\mathcal{A}$ is an FWU algebra,
$\sigma_{1}$ and $\sigma_{2}$ are weak* approximately unitarily equivalent in
$\mathcal{N}\otimes \mathcal{N}$. Thus there is a net $\left \{  U_{\lambda
}\right \}  $ of unitary operators in $\mathcal{N}\otimes \mathcal{N}$ such
that, for every $b\in \pi_{\tau}\left(  \mathcal{A}\right)  ,$%
\[
\left \Vert U_{\lambda}^{\ast}\left(  b\otimes1\right)  U_{\lambda}-\left(
1\otimes b\right)  \right \Vert _{2,\tau \otimes \tau}\rightarrow0\text{ .}%
\]
Since, for each $\lambda$, the map%
\[
b\mapsto U_{\lambda}^{\ast}\left(  b\otimes1\right)  U_{\lambda}-\left(
1\otimes b\right)
\]
is $\left \Vert {}\right \Vert _{2,\tau \otimes \tau}$-continuous and linear on
$\mathcal{N}$ and has norm at most $2$, and since $\pi_{\tau}\left(
\mathcal{A}\right)  $ is $\left \Vert {}\right \Vert _{2,\tau \otimes \tau}$ dense
in $\mathcal{N}$, we see that, for every $b\in \mathcal{N}$,%
\[
\left \Vert U_{\lambda}^{\ast}\left(  b\otimes1\right)  U_{\lambda}-\left(
1\otimes b\right)  \right \Vert _{2,\tau \otimes \tau}\rightarrow0\text{ .}%
\]
Thus, by Theorem \ref{Connes2}, $\mathcal{N}$ is hyperfinite. It follows from
Lemma \ref{factor} that $\mathcal{A}$ is tracially nuclear. The other
direction is contained in Theorem \ref{half1}.
\end{proof}

\bigskip

\bigskip

\bigskip

\end{document}